\newfont{\teneufm}{eufm10}
\newfont{\seveneufm}{eufm7}
\newfont{\fiveeufm}{eufm5}
\def\bbbc{{\mathchoice {\setbox0=\hbox{$\displaystyle\rm C$}\hbox{\hbox
to0pt{\kern0.4\wd0\vrule height0.9\ht0\hss}\box0}}
{\setbox0=\hbox{$\textstyle\rm C$}\hbox{\hbox
to0pt{\kern0.4\wd0\vrule height0.9\ht0\hss}\box0}}
{\setbox0=\hbox{$\scriptstyle\rm C$}\hbox{\hbox
to0pt{\kern0.4\wd0\vrule height0.9\ht0\hss}\box0}}
{\setbox0=\hbox{$\scriptscriptstyle\rm C$}\hbox{\hbox
to0pt{\kern0.4\wd0\vrule height0.9\ht0\hss}\box0}}}}
\def\bbbq{{\mathchoice {\setbox0=\hbox{$\displaystyle\rm
Q$}\hbox{\raise
0.15\ht0\hbox to0pt{\kern0.4\wd0\vrule height0.8\ht0\hss}\box0}}
{\setbox0=\hbox{$\textstyle\rm Q$}\hbox{\raise
0.15\ht0\hbox to0pt{\kern0.4\wd0\vrule height0.8\ht0\hss}\box0}}
{\setbox0=\hbox{$\scriptstyle\rm Q$}\hbox{\raise
0.15\ht0\hbox to0pt{\kern0.4\wd0\vrule height0.7\ht0\hss}\box0}}
{\setbox0=\hbox{$\scriptscriptstyle\rm Q$}\hbox{\raise
0.15\ht0\hbox to0pt{\kern0.4\wd0\vrule height0.7\ht0\hss}\box0}}}}
\def\bbbt{{\mathchoice {\setbox0=\hbox{$\displaystyle\rm
T$}\hbox{\hbox to0pt{\kern0.3\wd0\vrule height0.9\ht0\hss}\box0}}
{\setbox0=\hbox{$\textstyle\rm T$}\hbox{\hbox
to0pt{\kern0.3\wd0\vrule height0.9\ht0\hss}\box0}}
{\setbox0=\hbox{$\scriptstyle\rm T$}\hbox{\hbox
to0pt{\kern0.3\wd0\vrule height0.9\ht0\hss}\box0}}
{\setbox0=\hbox{$\scriptscriptstyle\rm T$}\hbox{\hbox
to0pt{\kern0.3\wd0\vrule height0.9\ht0\hss}\box0}}}}
\def\bbbs{{\mathchoice
{\setbox0=\hbox{$\displaystyle     \rm S$}\hbox{\raise0.5\ht0\hbox
to0pt{\kern0.35\wd0\vrule height0.45\ht0\hss}\hbox
to0pt{\kern0.55\wd0\vrule height0.5\ht0\hss}\box0}}
{\setbox0=\hbox{$\textstyle        \rm S$}\hbox{\raise0.5\ht0\hbox
to0pt{\kern0.35\wd0\vrule height0.45\ht0\hss}\hbox
to0pt{\kern0.55\wd0\vrule height0.5\ht0\hss}\box0}}
{\setbox0=\hbox{$\scriptstyle      \rm S$}\hbox{\raise0.5\ht0\hbox
to0pt{\kern0.35\wd0\vrule height0.45\ht0\hss}\raise0.05\ht0\hbox
to0pt{\kern0.5\wd0\vrule height0.45\ht0\hss}\box0}}
{\setbox0=\hbox{$\scriptscriptstyle\rm S$}\hbox{\raise0.5\ht0\hbox
to0pt{\kern0.4\wd0\vrule height0.45\ht0\hss}\raise0.05\ht0\hbox
to0pt{\kern0.55\wd0\vrule height0.45\ht0\hss}\box0}}}}
\def\bbbz{{\mathchoice {\hbox{$\sf\textstyle Z\kern-0.4em Z$}}
{\hbox{$\sf\textstyle Z\kern-0.4em Z$}}
{\hbox{$\sf\scriptstyle Z\kern-0.3em Z$}}
{\hbox{$\sf\scriptscriptstyle Z\kern-0.2em Z$}}}}
\newtheorem{thm}{Theorem}
\newtheorem{lemma}[thm]{Lemma}
\def\cB{{\mathcal B}}
\def\cC{{\mathcal C}}
\def\cX{{\mathcal X}}
\def\({\left(}
\def\){\right)}
\def\[{\left[}
\def\]{\right]}
\def\<{\langle}
\def\>{\rangle}
\def\F{\mathbb{F}}
\def\Z{\mathbb{Z}}
\def\T{\mathbb{T}}
\def\R{\mathbb{R}}
\def\C{\mathbb{C}}
\def\Fp{\F_p}
\def\Tm{\T_m}
\def\Tn{\T_n}
\def\Ts{\T_s}
\def\vec#1{\mathbf{#1}}
\def\dist{\mathrm{dist}}
\def\Fmn{\mathfrak{F}_{m,n}}
\def\Gmnp{\mathfrak{G}_{m,n,p}}
\def\fC{\mathfrak{C}}
\def\mand{\qquad\mbox{and}\qquad}
\begin{document}

\title{\bf On the Distribution of Values and Zeros of Polynomial
Systems over Arbitrary Sets}

\author{
{\sc   Bryce Kerr} \\
{Department of Computing, Macquarie University} \\
{Sydney, NSW 2109, Australia} \\
{\tt  bryce.kerr@mq.edu.au}
\and
{\sc   Igor E.~Shparlinski}\thanks{This work was supported 
in part by ARC
Grant~DP1092835} \\
{Department of Computing, Macquarie University} \\
{Sydney, NSW 2109, Australia} \\
{\tt  igor.shparlinski@mq.edu.au}}

\date{}

\maketitle

\begin{abstract}  Let $G_1,\ldots, G_n \in \Fp[X_1,\ldots,X_m]$  
be $n$ polynomials in $m$ variables over the finite field $\Fp$ of $p$ elements. 
A result of {\'E}.~Fouvry and N.~M.~Katz shows that under some natural condition, 
for any fixed $\varepsilon$ 
and sufficiently large prime $p$ the vectors of fractional parts
$$
\(\left\{\frac{G_1(\vec{x})}{p}\right\}, \ldots,\left\{\frac{G_n(\vec{x})}{p}\right\} \), 
\qquad \vec{x} \in \Gamma,
$$ 
are uniformly distributed in the unit cube 
$[0,1]^n$ for any cube $\Gamma \in [0, p-1]^m$ with the side length 
$h \ge p^{1/2} (\log p)^{1 + \varepsilon}$. Here we use this result to show 
the above vectors remain uniformly distributed, 
when $\vec{x}$ runs through a rather general set. We also obtain 
new results about the distribution of solutions to system of 
polynomial congruences. 
\end{abstract}

\paragraph{Mathematics Subject Classification (2010):} 11D79, 11K38, 11L07

\section{Introduction}

Let $p$ be a prime and let $\Fp$ be the finite field of $p$ elements, 
which we assume to be represented by the set $\{0, 1, \ldots, p-1\}$.

Given $n$ polynomials $G_j(X_1,\ldots,X_m) \in \Fp[X_1,\ldots,X_m]$, $j=1, \ldots, n$,
in $m$ variables
with integer coefficients, 
we consider the following points formed by fractional parts:
\begin{equation}
\label{eq:points}
\(\left\{\frac{G_1(\vec{x})}{p}\right\}, \ldots,\left\{\frac{G_n(\vec{x})}{p}\right\} \), 
\qquad  \vec{x} = (x_1, \ldots, x_m) \in \Fp^m.
\end{equation}

We say that the polynomials $G_1, \ldots, G_n$ are {\it degree $2$ independent over $\Fp$\/} if any 
non-trivial linear combinations $a_1G_1+\ldots + a_nG_n$ is a polynomial 
of degree at least $2$ over $\Fp$.
 
Let $\Gmnp$  denote the family
of  polynomial systems $\{G_1, \ldots, G_n\}$ of $n$ polynomials in  $m$ variables
that are degree $2$ independent over $\Fp$.

Fouvry and Katz~\cite{FoKa} have shown that for any 
 $\{G_1, \ldots, G_n\}\in \Gmnp$, the points~\eqref{eq:points} are 
uniformly distributed in the unit cube 
$[0,1]^n$, where $\vec{x}$ runs through the integral points
in  any cube $\Gamma \in [0, p-1]^m$ with side length 
$h \ge p^{1/2} (\log p)^{1 + \varepsilon}$.
Here we use several of the results from~\cite{FoKa} combined with 
some ideas of Schmidt~\cite{Schm} to obtain a similar uniformity of 
distribution result when $\vec{x}$ runs through a set from a very 
general family. 
For example, this holds for $\vec{x}$ that belong to 
the dilate  $p\Omega$ of a convex set $\Omega \in [0, 1]^m$ of 
Lebesgue measure at least $p^{-1/2 + \varepsilon}$ for any fixed $\varepsilon > 0$
and sufficiently large $p$.
We note that standard way of moving from boxes to arbitrary 
convex sets, via the isotropic discrepancy, see~\cite[Theorem~2]{Schm},
leads to a much weaker result which is nontrivial only for 
sets $\Omega \in [0, 1]^m$ of 
Lebesgue measure at least $p^{-1/2m + \varepsilon}$.

As in~\cite{Shp}, it is crucial for our approach that the error term 
in the aforementioned asymptotic formula of~\cite{FoKa}
depends on the size of the cube  $\Gamma \in [0, p-1]^m$
and decreases rapidly together with the size of $\Gamma$. 
We note that a similar idea has also recently been used in~\cite{Ker}
in combination with a new upper bound on the number of zeros 
of multivariate polynomial congruences in small cubes.  

Furthermore, given $n$ polynomials $F_j(X_1,\ldots,X_m) \in \Z[X_1,\ldots,X_m]$, 
$j=1, \ldots, n$, 
we consider the distribution of points in the set $\cX_{p}$,
of solutions
$\vec{x} = (x_1, \ldots, x_m) \in \Fp^m$ to the system of congruences
\begin{equation}
\label{eq:syst}
F_j(\vec{x}) \equiv 0 \pmod p, \qquad j =1, \ldots, n.
\end{equation}

Let  $\Fmn$ denote the family
of  polynomial systems $\{F_1, \ldots, F_n\}$ of $n$ polynomials in  $m\ge n+1$ variables
with integer coefficients, such that the solution set of 
the  system of equations (over $\C$)
$$
F_j(\vec{x}) = 0, \qquad j =1, \ldots, n,
$$ 
has at least one   
absolutely irreducible  component of dimension $m-n$ and no absolutely irreducible  component 
is contained in a hyperplane.
For sufficiently large $p$ all absolutely irreducible  components
remain of the same dimension and are absolutely irreducible modulo $p$,
so by the Lang-Weil theorem~\cite{LaWe}
we have
\begin{equation}
\label{LaWe}
 \# \cX_p = \nu p^{m-n} + O\(p^{m-n-1/2}\),
\end{equation}
where $\nu$ is the number of absolutely irreducible components of $\cX_p$
of dimension $m-n$.  
It is shown in~\cite{Shp}, that for  a rather general class of sets $\Omega$, 
including all convex sets, we have
\begin{equation}
\label{eq:Omega} 
 T_p(\Omega) =  \# \cX_p \(\mu(\Omega) +O\(p^{-1/2(n+1)}\log p\)\)
\end{equation}
with
$$T_p(\Omega)= \# (\cX_p \cap \Omega).$$The asymptotic formula~\eqref{eq:Omega} is  based on a 
combination of a result of Fouvry~\cite{Fouv} and
Schmidt~\cite{Schm}. 

Here we show that for a more restricted class of sets, which includes such natural
sets as $m$-dimensional balls,  one can improve~\eqref{eq:Omega} and 
obtain an asymptotic formula which is nontrivial provided that 
$$
\mu(\Omega) \ge p^{-1/2+\varepsilon}
$$ 
for any fixed $\varepsilon> 0$ and a sufficiently large $p$,
while~\eqref{eq:Omega}  is nontrivial only under the 
condition $\mu(\Omega) \ge p^{-1/2(n+1)+\varepsilon}$ (but applies to a
wider class of sets).

\section{Well and Very Well Shaped Sets}

Let $\Ts= (\R/\Z)^s$ be the $s$-dimensional unit torus.

We define the  distance between a vector $\vec{u} \in \Tm$
and a set $\Omega\subseteq\Tm $  by
$$
\dist(\vec{u},\Omega) = \inf_{\vec{w} \in\Omega}
\|\vec{u} - \vec{w}\|,
$$
where  $\|\vec{v}\|$ denotes the Euclidean norm of $\vec{v}$. Given
$\varepsilon >0$ and a set  $\Omega \subseteq\Tm $, we define
the  sets
$$
\Omega_\varepsilon^{+} = \left\{ \vec{u} \in \Tm \backslash
\Omega \ : \ \dist(\vec{u},\Omega) < \varepsilon \right\}
$$
and
$$
\Omega_\varepsilon^{-} = \left\{ \vec{u} \in \Omega \ : \
\dist(\vec{u},\Tm \backslash \Omega )  < \varepsilon  \right\} .
$$

We say that a set $\Omega$ is {\it well shaped\/} if 
\begin{equation}
\label{eq:Blowup}
\mu\(\Omega_\varepsilon^{\pm }\)  \le C \varepsilon, 
\end{equation}
for some constant $C$, where $\mu$ is the Lebesgue measure on $\Tm$.

It is known that any convex set is well shaped, 
see~\cite[Lemma~1]{Schm}.

Finally,  a very general result of Weyl~\cite[Equation~(2)]{Weyl} 
(taken with $n = m$ and $\nu = n-1$),
that actually expresses $\mu\(\Omega_\varepsilon^{\pm }\)$ 
via a finite sum of powers $\varepsilon^i$, $i = 1, \ldots, m$
in the case when the boundary of $\Omega$ is manifold of dimension $n-1$.
Examining the constants in this expansion we see that any such set with 
a bounded surface size is well shaped.

Furthermore, we say that a set $\Omega \subseteq \Tm$ is \emph{very well shaped} if for every $\varepsilon>0$ the measures of the sets 
$\Omega_{\varepsilon}^{\pm}$
exist and satisfy
\begin{equation}
\label{very well shaped}
\mu(\Omega_{\varepsilon}^{\pm})\leq C\(\mu(\Omega)^{1-1/m}\varepsilon+\varepsilon^{m}\)
\end{equation}
for some  $C>0$, the most natural example of a very well shaped set being a Euclidian ball.

We recall that the notation $A(t) \ll B(t)$  is equivalent to 
$A(t) =O(B(t))$, which means that there exists some absolute constant, $\alpha$,  such that
$|A(t)|\leq \alpha B(t)$ for all values of $t$ within a certain range.
Throughout the paper, the implied constants in symbols `$O$' and `$\ll$'
may depend on the constant $C$ in~\eqref{eq:Blowup} and~\eqref{very well shaped} and 
it may also depend on the polynomial system  $\{F_1,\dots, F_n\}\in \Fmn$  
(but does not depend on the polynomial system  $\{G_1,\dots, G_n\} \in \Gmnp$).

\section{Discrepancy}

Given a sequence $\Xi$ of $N$ points 
\begin{equation}
  \label{eq:GenSeq}
  \Xi = \{(\xi_{k,1}, \ldots, \xi_{k,n})\}_{k=1}^{N},
  \end{equation}
in  $\Tn$, 
we define its {\it discrepancy\/}
as
$$
\varDelta(\Xi) = \sup_{\varPi \subseteq \Tn}\left| \frac{ \#A(\Xi,\varPi)}{N} - \lambda(\varPi)\right|,
$$
 where  $A(\Xi,\varPi)$ is the number of  
$k\le N$ such that $(\xi_{k,1}, \ldots, \xi_{k,n}) \in \varPi$,  $\lambda$ is the Lebesgue measure on $\Tn$
and  the supremum is taken over all boxes 
\begin{equation}
  \label{eq:Box}
  \varPi = [\alpha_1,
\beta_1) \times \ldots \times [\alpha_n, \beta_n) \subseteq \Tn,
  \end{equation}
see~\cite{DrTi,KuNi}.  

We also define the discrepancy of an empty sequence as 1.

\section{Main Results}

For a set $\Omega \subseteq \Tm$ 
let $D(\Omega)$ be the discrepancy of  the points
$$
\(\left\{\frac{G_1(\vec{x})}{p}\right\}, \ldots,\left\{\frac{G_n(\vec{x})}{p}\right\} \), 
\qquad   \vec{x}\in p\Omega.
$$

\begin{thm}
\label{thm:Omega} 
For any   polynomial system  $\{G_1,\dots, G_n\} \in \Gmnp$ %% with $m \ge n+1$ 
and any well shaped set $\Omega\in \Tm$,  we have  
$$ 
D(\Omega) \ll \mu(\Omega)^{-1} p^{-1/2}(\log p)^{n+2}.
$$
\end{thm}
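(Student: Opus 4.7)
The plan is to reduce $D(\Omega)$ to the Fouvry--Katz discrepancy bound on axis-parallel cubes by a tiling argument, with the contribution from near-boundary lattice points controlled by the well-shaped hypothesis \eqref{eq:Blowup}. Fix a test box $\varPi\subseteq\Tn$ as in \eqref{eq:Box} and, for any measurable $\cR\subseteq\R^m$, let $A(\cR,\varPi)$ denote the number of $\vec x\in\cR\cap\Z^m$ whose image $(\{G_j(\vec x)/p\})_{j=1}^n$ lies in $\varPi$. Then $D(\Omega)=\sup_\varPi|A(p\Omega,\varPi)/N-\lambda(\varPi)|$ with $N=\#(p\Omega\cap\Z^m)$; applying \eqref{eq:Blowup} with $\varepsilon=1/p$ yields $N=p^m\mu(\Omega)+O(p^{m-1})$.

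Next I would introduce a parameter $h$ with $p^{1/2}(\log p)^{1+\varepsilon}\le h\le p$, to be chosen at the end. In the standard tiling of $\R^m$ by half-open cubes of side $h$, let $\cR^-$ be the union of those cubes contained in $p\Omega$ and $\cR^+$ the union of those meeting $p\Omega$, so that $\cR^-\subseteq p\Omega\subseteq\cR^+$. Every point of $\cR^+\setminus\cR^-$ lies within Euclidean distance $\sqrt{m}\,h$ of $\partial(p\Omega)$, hence $\cR^+\setminus\cR^-\subseteq p\bigl(\Omega^+_{\sqrt m\,h/p}\cup\Omega^-_{\sqrt m\,h/p}\bigr)$, a set of Lebesgue measure $\ll hp^{m-1}$ by \eqref{eq:Blowup}; in particular it contains $O(hp^{m-1})$ integer points.

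On each cube $\Gamma$ of side $h$, the estimate of Fouvry and Katz~\cite{FoKa}, combined with the Erd\H{o}s--Tur\'an--Koksma inequality to turn the underlying exponential sum bound into a counting statement uniform in $\varPi$, delivers
$A(\Gamma,\varPi)=h^m\lambda(\varPi)+O\bigl(h^m p^{-1/2}(\log p)^{n+1}\bigr)$.
Summing over the $\ll p^m\mu(\Omega)/h^m$ cubes constituting $\cR^\pm$, sandwiching $A(p\Omega,\varPi)$ between $A(\cR^-,\varPi)$ and $A(\cR^+,\varPi)$, and absorbing the boundary count, one arrives at
$A(p\Omega,\varPi)=p^m\mu(\Omega)\lambda(\varPi)+O\bigl(p^m\mu(\Omega)\,p^{-1/2}(\log p)^{n+1}+hp^{m-1}\bigr)$.
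Dividing by $N$ gives $D(\Omega)\ll p^{-1/2}(\log p)^{n+1}+h/(p\mu(\Omega))$, and the minimal permissible choice $h=p^{1/2}(\log p)^{1+\varepsilon}$ yields the claimed bound $D(\Omega)\ll\mu(\Omega)^{-1}p^{-1/2}(\log p)^{n+2}$.

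The hard part will be securing the per-cube counting estimate above with the stated uniform dependence on $\varPi$ and the correct logarithmic exponent: one must extract from~\cite{FoKa} an exponential sum bound valid for sums over cubes of side $h\ge p^{1/2}(\log p)^{1+\varepsilon}$ and then carefully apply the $n$-dimensional Erd\H{o}s--Tur\'an--Koksma inequality, which is where the $(\log p)^{n+1}$ factor enters. Once this input is in hand, the remaining bookkeeping of interior cubes versus boundary shell, together with the one-parameter optimization of $h$ at its minimal admissible value, is routine and parallels Schmidt's treatment of convex sets in~\cite{Schm}; the appearance of $\mu(\Omega)^{-1}$ (and not a smaller power) reflects the fact that the boundary shell contribution $h/(p\mu(\Omega))$ is already the dominant source of error at this minimal choice of $h$.
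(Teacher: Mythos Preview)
Your per-cube counting estimate is wrong, and this is not a minor slip: it is precisely the place where a single-scale tiling fails and forces the multi-scale decomposition the paper actually uses. From Lemma~\ref{lem:ExpSums} the exponential sum over a cube of side $h$ is $\ll p^{1/2}h^{m-1}\log p$, so after Erd\H{o}s--Tur\'an--Koksma the error in $A(\Gamma,\varPi)$ is $O\bigl(h^{m-1}p^{1/2}(\log p)^{n+1}\bigr)$, not $O\bigl(h^{m}p^{-1/2}(\log p)^{n+1}\bigr)$. Equivalently, the discrepancy on a cube of side $h$ is $\ll p^{1/2}h^{-1}(\log p)^{n+1}$, which is \emph{not} independent of $h$; it only becomes $\asymp p^{-1/2}(\log p)^{n+1}$ when $h\asymp p$.

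With the correct per-cube error, your interior sum over $\asymp p^m\mu(\Omega)/h^m$ cubes gives $\asymp p^{m+1/2}\mu(\Omega)h^{-1}(\log p)^{n+1}$, and after division by $N$ the two contributions are $p^{1/2}h^{-1}(\log p)^{n+1}$ and $h/(p\mu(\Omega))$. Balancing yields only $D(\Omega)\ll p^{-1/4}\mu(\Omega)^{-1/2}(\log p)^{(n+1)/2}$, which is strictly weaker than the theorem (e.g.\ for $\mu(\Omega)\asymp 1$ you get $p^{-1/4}$ instead of $p^{-1/2}$). The remedy, following Schmidt, is a dyadic decomposition: at scale $2^{-i}$ one keeps only the new cubes $\cB_i$ not already covered at scale $2^{-(i-1)}$, and the well-shaped hypothesis forces $\#\cB_i\ll 2^{i(m-1)}$, a \emph{surface}-type count. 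This cancels exactly against the per-cube error $\asymp 2^{-i(m-1)}p^{m-1/2}(\log p)^{n+1}$, so each dyadic level contributes $O\bigl(p^{m-1/2}(\log p)^{n+1}\bigr)$ and summing over $M\asymp\log p$ levels produces the extra $\log p$ and the bound $p^{m-1/2}(\log p)^{n+2}$. Your single-scale tiling, by contrast, charges the full \emph{volume}'s worth of cubes with the per-cube error, which is why it loses a factor of $p^{1/4}$.
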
 

We can get a sharper error term for the case of very well shaped sets.
\begin{thm}
\label{thm:Omega vws}
For any   polynomial system  $\{G_1,\dots, G_n\} \in \Gmnp$  %% with $m \ge n+1$ 
and any very well shaped set $\Omega\in \Tm$,  we have    
$$ 
D(\Omega) \ll \mu(\Omega)^{-1/m}p^{-1/2}(\log p)^{n+2}. 
$$
\end{thm}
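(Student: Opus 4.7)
The plan is to rerun the proof of Theorem~\ref{thm:Omega} essentially verbatim, substituting the stronger inequality~(\ref{very well shaped}) for~(\ref{eq:Blowup}) at the one point where the geometry of $\Omega$ enters. Following that argument, I would partition $[0,p-1]^m$ into translates of an integer cube of side $h=\lceil p^{1/2}(\log p)^{1+\eta}\rceil$ for some fixed small $\eta>0$, classifying each cube as \emph{interior} (fully contained in $p\Omega$) or \emph{boundary}. For each interior cube $\gamma$ the Fouvry--Katz estimate used in the proof of Theorem~\ref{thm:Omega} supplies
$$
\#\bigl\{\vec{x}\in\gamma:(\{G_i(\vec{x})/p\})_{i=1}^n\in\Pi\bigr\} = h^m\lambda(\Pi) + O(\cE(h,p))
$$
uniformly in the target box $\Pi\subseteq\T_n$, where $\cE(h,p)$ is the same error term appearing in the proof of Theorem~\ref{thm:Omega}. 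Summing over the $\ll p^m\mu(\Omega)h^{-m}$ interior cubes and dividing by $N\asymp p^m\mu(\Omega)$ reproduces the interior contribution of Theorem~\ref{thm:Omega}, which is $O(p^{-1/2}(\log p)^{n+2})$.

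The only modification is at the boundary step. Any lattice point of $p\Omega$ not contained in an interior cube lies within Euclidean distance $h\sqrt{m}$ of $\partial(p\Omega)$, hence inside $p\cdot\Omega_{h\sqrt{m}/p}^{\pm}$. Where the proof of Theorem~\ref{thm:Omega} used~(\ref{eq:Blowup}) to bound the number of such points by $O(p^{m-1}h)$, I would apply~(\ref{very well shaped}) with parameter $h\sqrt{m}/p$ to obtain the sharper bound
$$
\#\{\text{boundary lattice points in }p\Omega\}\ll p^m\mu\bigl(\Omega_{h\sqrt{m}/p}^{\pm}\bigr)\ll p^{m-1}h\,\mu(\Omega)^{1-1/m}+h^m.
$$
Dividing by $N$ gives an additive boundary contribution of $O(p^{-1}h\,\mu(\Omega)^{-1/m})+O(h^m p^{-m}\mu(\Omega)^{-1})$. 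With our choice of $h$ the first piece becomes $O(p^{-1/2}\mu(\Omega)^{-1/m}(\log p)^{1+\eta})$, which matches the claimed bound after absorbing $\eta$ into the logarithmic exponent.

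The only genuinely new quantity to control is the second piece, $h^m p^{-m}\mu(\Omega)^{-1}\ll p^{-m/2}(\log p)^{m(1+\eta)}\mu(\Omega)^{-1}$, arising from the $\varepsilon^m$ term in~(\ref{very well shaped}). This is dominated by the first piece whenever $\mu(\Omega)^{(m-1)/m}\gg p^{-(m-1)/2}(\log p)^{O(1)}$, a condition much weaker than the nontriviality threshold $\mu(\Omega)\ge p^{-1/2+\eta'}$ of the theorem, so no separate treatment of this range is needed. The main obstacle is essentially bookkeeping: verifying that the interior/boundary split from the proof of Theorem~\ref{thm:Omega} admits~(\ref{very well shaped}) as a drop-in replacement for~(\ref{eq:Blowup}) without disturbing any other part of the argument, and checking that the improvement from $\mu(\Omega)^{-1}$ to $\mu(\Omega)^{-1/m}$ in the final discrepancy bound propagates correctly through the aggregation over the many interior and boundary cubes.
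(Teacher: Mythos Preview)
Your proposal does not actually ``rerun the proof of Theorem~\ref{thm:Omega}'': that proof does \emph{not} use a single grid at scale $h\approx p^{1/2}(\log p)^{1+\eta}$ but rather Schmidt's dyadic decomposition into the families $\cB_i$ of cubes of side $2^{-i}$, $i=1,\ldots,M$, where $\cB_i$ consists only of those cubes not already covered at a coarser scale. This multi-scale structure is essential, and your single-scale scheme fails at the interior step. For an interior cube of side $h$ the error in $N(\gamma;\varPi)$ is, by~\eqref{eq:N cube 2}, of order $p^{1/2}h^{m-1}(\log p)^{n+1}$; summing over the $\asymp p^m\mu(\Omega)h^{-m}$ interior cubes gives a total interior error $\asymp p^{m+1/2}\mu(\Omega)(\log p)^{n+1}/h$. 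With your choice $h\approx p^{1/2}(\log p)^{1+\eta}$ this is $\asymp p^m\mu(\Omega)(\log p)^{n-\eta}$, which after division by $N\asymp p^m\mu(\Omega)$ yields only a polylogarithmic bound, not the claimed $O(p^{-1/2}(\log p)^{n+2})$. In short, at scale $h\approx p^{1/2}$ the discrepancy of each individual cube is merely $O((\log p)^{n-\eta})$, so nothing is gained by aggregating.

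The point of the dyadic construction is that at each scale $2^{-i}$ only the \emph{boundary} cubes $\cB_i$ are used, with $\#\cB_i\ll 2^{i(m-1)}$ (or $\ll 1+\mu(\Omega)^{(m-1)/m}2^{i(m-1)}$ in the very well shaped case); this saves a factor of $2^i$ over the naive count $2^{im}$ and makes $\sum_i \#\cB_i\,2^{-i(m-1)}\ll M$ rather than $\ll 2^M$. The paper's proof of Theorem~\ref{thm:Omega vws} follows exactly this route: it keeps the dyadic $\cB_i$, replaces~\eqref{eq:Card} and~\eqref{eq:Aprox} by the sharper bounds~\eqref{eq:Card vws} and~\eqref{eq:Aprox vws} coming from~\eqref{very well shaped}, observes that $\#\cB_i>0$ forces $2^{-im}\le\mu(\Omega)$ (so the stray ``$+1$'' is harmless), and then chooses $2^M\approx p$ rather than $2^M\approx p^{1/2}$. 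Your intuition that the geometry of $\Omega$ enters at only one place is correct, but that place is the bound on $\#\cB_i$ inside the dyadic sum, not a boundary-cube count at a single fixed scale.
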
 

We prove the following
\begin{thm}
\label{system}
For any polynomial system $\{F_1,\dots,F_n\}\in \Fmn$  
and any  very well shaped set $\Omega\in \Tm$, we have
$$
T_p(\Omega)= \# \cX_p \( \mu(\Omega)+O 
\(\mu(\Omega)^{1-1/m}p^{-1/2(n+1)}\log{p}+p^{-1/2}(\log{p})^{n+2}\)\).
$$
\end{thm}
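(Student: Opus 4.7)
The plan is to approximate $\Omega$ by a union of small cubes, using the very well shaped hypothesis~\eqref{very well shaped} to control the boundary error and character-sum bounds in the spirit of Theorem~\ref{thm:Omega vws} to handle the bulk.

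Fix a scale $h>0$ (to be chosen later) and partition $\Tm$ into axis-aligned cubes $\{\Gamma_k\}$ of side $h$. Define inner and outer approximations
$$
\Omega^- = \bigcup_{\Gamma_k\subseteq \Omega}\Gamma_k, \qquad \Omega^+ = \bigcup_{\Gamma_k\cap \Omega\neq\emptyset}\Gamma_k,
$$
so $\Omega^-\subseteq\Omega\subseteq\Omega^+$. Since every point of $\Omega^+\setminus\Omega^-$ lies within distance $h\sqrt m$ of $\partial\Omega$, the hypothesis~\eqref{very well shaped} gives
$$
\mu(\Omega^+\setminus\Omega^-)\ll \mu(\Omega)^{1-1/m}h+h^m.
$$
Monotonicity $T_p(\Omega^-)\le T_p(\Omega)\le T_p(\Omega^+)$ then reduces the task to estimating $T_p(\Omega^\pm)$ up to a term of order $\#\cX_p(\mu(\Omega)^{1-1/m}h+h^m)$.

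To estimate $T_p(\Omega^\pm)$ I would follow the approach underlying Theorem~\ref{thm:Omega vws}. Using the orthogonality relation $\mathbf{1}_{\cX_p}(\vec x) = p^{-n}\sum_{\vec a\in\Fp^n}e_p(\vec a\cdot \vec F(\vec x))$, one writes
$$
T_p(\Omega^\pm) = \frac{\#(p\Omega^\pm\cap \Z^m)}{p^n} + \frac{1}{p^n}\sum_{\vec a\neq \vec 0}\ \sum_{\vec x\in p\Omega^\pm\cap \Z^m}e_p(\vec a\cdot\vec F(\vec x)).
$$
The $\vec a=\vec 0$ term produces the main contribution $\#\cX_p\mu(\Omega^\pm)$, up to a lattice-point count error again controlled by~\eqref{very well shaped}. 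For $\vec a\neq\vec 0$, since $\Omega^\pm$ is a union of $h$-cubes, I would treat the inner character sum cube by cube using the Fouvry--Katz bounds~\cite{FoKa} and Fouvry's exponential sum estimates~\cite{Fouv} for polynomial systems in $\Fmn$; summing over $\vec a$ via a Polya--Vinogradov style completion of sum incurs a $(\log p)^{n+2}$ logarithmic loss together with the square-root gain $p^{-1/2}$. Finally, choosing $h\asymp p^{-1/2(n+1)}\log p$ balances the boundary and bulk contributions to yield the asserted error $\mu(\Omega)^{1-1/m}p^{-1/2(n+1)}\log p + p^{-1/2}(\log p)^{n+2}$.

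The main technical obstacle is the character-sum estimate in the second step. The hypothesis $\{F_1,\ldots,F_n\}\in\Fmn$ guarantees good control of $\cX_p$ as a whole via the Lang--Weil bound~\eqref{LaWe}, but does not immediately imply square-root cancellation in $\sum e_p(\vec a\cdot\vec F(\vec x))$ for every $\vec a\neq\vec 0$: the ``bad'' frequencies, for which $\vec a\cdot\vec F$ is degenerate, must be handled via an auxiliary count of $\cX_p$ inside hyperplane sections, exploiting the no-hyperplane condition on the absolutely irreducible components of the zero variety. Once these exceptional frequencies are separated, the remaining ones admit the required Fouvry--Katz type bounds, and the total error matches the bulk estimate above.
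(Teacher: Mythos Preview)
Your outline has two genuine gaps, and together they prevent the argument from going through.

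\textbf{The single-scale decomposition is too crude.} The paper does not tile $\Omega$ at a single scale $h$; it uses the dyadic Schmidt construction $\cB_1,\cB_2,\ldots$ (cubes of side $2^{-i}$, each $\cB_i$ consisting only of cubes not already covered at a coarser scale) together with the black-box count of Lemma~\ref{fouv}. This matters because Lemma~\ref{fouv} has a per-cube error
\[
p^{(m-n)/2}(\log p)^m + (1/k)^{m-n-1}p^{m-n-1/2}(\log p)^{n+1}
\]
that does not shrink fast enough as the cube shrinks. In a single-scale tiling with your choice $h\asymp p^{-1/2(n+1)}\log p$, the number of cubes is $\asymp \mu(\Omega)h^{-m}$, and summing the second error term alone gives
\[
\mu(\Omega)h^{-m}\cdot h^{\,m-n-1}p^{m-n-1/2}(\log p)^{n+1}=\mu(\Omega)h^{-(n+1)}p^{m-n-1/2}(\log p)^{n+1}\asymp \mu(\Omega)p^{m-n},
\]
which is as large as the main term. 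The dyadic scheme avoids this because $\#\cB_i\ll 1+\mu(\Omega)^{1-1/m}2^{i(m-1)}$ is boundary-sized rather than volume-sized: most of $\Omega$ is covered by a few large cubes whose individual errors are small, and only $O(\mu(\Omega)^{1-1/m}2^{M(m-1)})$ small cubes appear at the finest level. This is what produces the factor $\mu(\Omega)^{1-1/m}$ in the final bound and is not optional.

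\textbf{The exponential-sum step conflates two different hypotheses.} The Fouvry--Katz bound of Lemma~\ref{lem:ExpSums} requires $\{G_1,\ldots,G_n\}\in\Gmnp$, i.e.\ degree-$2$ independence. The hypothesis $\{F_1,\ldots,F_n\}\in\Fmn$ is a purely geometric condition on the zero variety over $\C$ and in no way implies that every nonzero combination $\vec a\cdot\vec F$ has degree $\ge 2$; indeed the $F_j$ may all be linear in some variable. So your orthogonality expansion cannot be controlled by Lemma~\ref{lem:ExpSums}, and your proposed fix---``handle bad frequencies via hyperplane sections''---is precisely the content of Fouvry's stratified estimate in~\cite{Fouv}, whose output is already packaged as Lemma~\ref{fouv}. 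The paper therefore bypasses the whole character-sum discussion and applies Lemma~\ref{fouv} directly to each dyadic cube, then balances the resulting error terms $R_1,R_2,R_3$ by choosing $2^{-M}\asymp p^{-1/2(n+1)}\log p$; the matching upper bound comes from applying the same argument to $\Tm\setminus\Omega$, using that $(\Tm\setminus\Omega)_\varepsilon^-=\Omega_\varepsilon^+$ satisfies the same very-well-shaped inequality~\eqref{very well shaped}.
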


\section{Exponential Sum and Congruences}
\label{sec:Exp}

Typically the bounds on the discrepancy of a 
sequence  are derived from bounds of exponential sums
with elements of this sequence. 
The relation is made explicit in 
the celebrated {\it Koksma--Sz\"usz
  inequality\/}, see~\cite[Theorem~1.21]{DrTi},
which we  present in the following form.

\begin{lemma}
  \label{lem:Kok-Szu}
   Suppose that for the sequqnce of points~\eqref{eq:GenSeq} for some integer $L \ge 1$ and the real number $S$ we have
  \begin{equation*}
    \left | \sum_{k=1}^{N} \exp \( 2 \pi i\sum_{j=1}^{n}a_j\xi_{k,j} \) \right |\le S, 
      \end{equation*}
 for all integers $-L\le a_j\le L$, $j=1, \ldots, n$,  not all equal to zero. Then, 
  \begin{equation*}
      D(\Gamma) \ll \frac{1}{L}+\frac{(\log L)^n}{N} S,
  \end{equation*}
  where the implied constant depends only on $n$.
\end{lemma}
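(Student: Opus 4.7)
The plan is to sandwich the indicator function of an arbitrary box $\varPi\subseteq\Tn$ between two non-negative trigonometric polynomials whose Fourier spectrum is supported in $\{\vec{a}\in\Z^n:|a_j|\le L\}$, and then apply the exponential-sum hypothesis to the non-zero Fourier modes while the zero mode supplies the main term $\lambda(\varPi)$. Writing the result as a supremum over boxes then yields the discrepancy bound.

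First I would invoke the classical Selberg/Vaaler majorant--minorant construction to obtain, for each coordinate interval $[\alpha_j,\beta_j)$, non-negative one-dimensional trigonometric polynomials $P_j^{\pm}(x)$ of degree at most $L$ with
$$
P_j^-(x)\le\chi_{[\alpha_j,\beta_j)}(x)\le P_j^+(x),
$$
with $\widehat{P_j^{\pm}}(0)=(\beta_j-\alpha_j)+O(1/L)$ and $|\widehat{P_j^{\pm}}(a)|\ll \min(\beta_j-\alpha_j,\,1/|a|)$ for $0<|a|\le L$. Taking tensor products $P^{\pm}(\vec{x})=\prod_{j=1}^n P_j^{\pm}(x_j)$ then produces non-negative trigonometric polynomials on $\Tn$ whose spectrum lies in $\{|a_j|\le L\}$; their non-negativity turns the coordinate-wise sandwich into the multidimensional one $P^-(\vec{x})\le\chi_\varPi(\vec{x})\le P^+(\vec{x})$, while their Fourier coefficients factor as $\widehat{P^{\pm}}(\vec{a})=\prod_j\widehat{P_j^{\pm}}(a_j)$.

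Next I would evaluate $\sum_{k=1}^N P^{\pm}(\xi_k)$ by expanding into Fourier modes:
$$
\sum_{k=1}^N P^{\pm}(\xi_k)=\sum_{|a_j|\le L}\widehat{P^{\pm}}(\vec{a})\sum_{k=1}^N\exp\!\left(2\pi i\sum_{j=1}^n a_j\xi_{k,j}\right).
$$
The term $\vec{a}=\vec{0}$ contributes $N\widehat{P^{\pm}}(\vec{0})=N\lambda(\varPi)+O(N/L)$, and each non-zero term is at most $|\widehat{P^{\pm}}(\vec{a})|\,S$ by hypothesis. The total non-zero contribution is therefore bounded by
$$
S\sum_{\vec{a}\ne\vec{0},\,|a_j|\le L}\prod_{j=1}^n\min(1,1/|a_j|)\ll S(\log L)^n,
$$
since $\sum_{0<|a|\le L}1/|a|\ll \log L$ in each coordinate. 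Combining the upper bound from $P^+$ with the matching lower bound from $P^-$, dividing by $N$ and taking the supremum over boxes $\varPi$ gives the claimed estimate for $\varDelta(\Xi)$.

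The main obstacle is invoking (or verifying) the one-dimensional majorants and minorants with the precise combination of non-negativity, spectrum in $[-L,L]$, mean within $O(1/L)$ of $\beta_j-\alpha_j$, and coefficient decay $\min(\beta_j-\alpha_j,1/|a|)$; this is the classical Selberg construction built from Beurling's extremal function. Once these building blocks are in place, the tensor-product reduction and the $(\log L)^n$ estimate for the sum of coefficient magnitudes are routine.
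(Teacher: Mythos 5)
The paper supplies no proof of this lemma, quoting it directly from~\cite[Theorem~1.21]{DrTi}; it is the Erd\H os--Tur\'an--Koksma inequality, and your Beurling--Selberg sketch is the right circle of ideas, with the main-term extraction and the $(\log L)^n$ bound on the non-zero Fourier modes both handled correctly.

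However, the claim that the one-dimensional minorant $P_j^-$ can be taken non-negative is false, and this is exactly what makes your tensor-product step work. Any trigonometric polynomial $P$ with $P\le\chi_{[\alpha_j,\beta_j)}$ is $\le 0$ off the interval, and since a non-trivial trigonometric polynomial cannot vanish on a set of positive measure, $P$ must be strictly negative somewhere; in the Selberg construction one even has $\widehat{P_j^-}(0)=(\beta_j-\alpha_j)-1/(L+1)$, which is negative for short intervals. Consequently $\prod_j P_j^-(x_j)\le\chi_\varPi(\vec{x})$ can fail: two negative factors give a positive product at a point where $\chi_\varPi=0$. Only the majorant half of your argument is sound as written, since $P_j^+\ge\chi_{[\alpha_j,\beta_j)}\ge 0$. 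The standard repair is to keep the non-negative majorants and build a trigonometric minorant of $\chi_\varPi$ by telescoping: writing $I_j=[\alpha_j,\beta_j)$, one has
\begin{equation*}
\prod_{j=1}^n P_j^+-\chi_\varPi=\sum_{j=1}^n\bigl(P_j^+-\chi_{I_j}\bigr)\prod_{k<j}\chi_{I_k}\prod_{k>j}P_k^+\le\sum_{j=1}^n\bigl(P_j^+-P_j^-\bigr)\prod_{k\ne j}P_k^+,
\end{equation*}
so that $\chi_\varPi\ge\prod_j P_j^+-\sum_j(P_j^+-P_j^-)\prod_{k\ne j}P_k^+$, and the right-hand side is a trigonometric polynomial with spectrum in $\{|a_j|\le L\}$ whose Fourier coefficients obey the same $\min(\beta_j-\alpha_j,1/|a_j|)$ bounds as before. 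With this replacement, your evaluation of the zero mode and the $(\log L)^n$ estimate for the non-zero modes go through unchanged and yield the stated inequality.
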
 

To use Lemma~\ref{lem:Kok-Szu} we need the following bound of 
 Fouvry and Katz~\cite[Equation~(10.6)]{FoKa} 

\begin{lemma}
  \label{lem:ExpSums}
For any polynomial system $\{G_1,\dots, G_n\} \in \Gmnp$ % with $m \ge n+1$  
and  arbitrary integers $u$ and $w$ 
with $1 \le w< p$, uniformly over all  non-zero modulo $p$ integer vectors  
$(a_1, \ldots, a_n)$ we have  
$$ 
\sum_{x_1, \ldots, x_m=u}^{u+w} 
\exp\( \frac{2 \pi i}{p} \sum_{j=1}^n a_j G_j(x_1,\ldots,x_m)\) \ll p^{1/2}w^{m-1} \log p.
$$
\end{lemma}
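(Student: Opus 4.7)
The stated bound is an incomplete exponential sum estimate for the polynomial $G(\vec{x}) := \sum_{j=1}^{n} a_j G_j(\vec{x})$, which by the degree-$2$ independence assumption and the non-vanishing of $(a_1,\ldots,a_n)$ modulo $p$ has degree at least $2$ over $\F_p$. The natural plan is to combine the classical completion technique with Deligne-type bounds on complete exponential sums. For each coordinate I would expand the indicator of $\{u\le x_i\le u+w\}$ in additive characters mod $p$,
$$
\mathbf{1}_{[u,u+w]}(x_i)=\frac{1}{p}\sum_{h_i=0}^{p-1}\alpha(h_i)\exp\!\left(\frac{2\pi i\,h_i x_i}{p}\right),
$$
where $\alpha(h_i):=\sum_{y=u}^{u+w}\exp(-2\pi i h_i y/p)$ satisfies $|\alpha(0)|=w+1$ and $|\alpha(h_i)|\ll p/|h_i|$ (with $h_i$ reduced in $(-p/2,p/2]$) for $h_i\ne 0$. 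Substituting and interchanging summations expresses the quantity in the lemma as
$$
\frac{1}{p^{m}}\sum_{\vec{h}\in\F_p^{m}}\alpha(h_1)\cdots\alpha(h_m)\,T(\vec{h}),\qquad T(\vec{h}):=\sum_{\vec{x}\in\F_p^{m}}\exp\!\left(\frac{2\pi i(G(\vec{x})+\vec{h}\cdot\vec{x})}{p}\right).
$$

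Next I would bound each complete sum $T(\vec{h})$. Since the linear perturbation $\vec{h}\cdot\vec{x}$ does not change the top-degree part of $G$, the degree-$2$ independence hypothesis---interpreted as ensuring that this top-degree form cuts out a sufficiently smooth projective hypersurface---places us in a setting where Deligne's Weil~II theorem yields the uniform bound $|T(\vec{h})|\ll p^{m/2}$. Combining with the one-dimensional estimate $\sum_{h=1}^{p-1}|\alpha(h)|\ll p\log p$ and splitting $\vec{h}$ by its set of vanishing coordinates produces a bound of shape
$$
\frac{1}{p^{m}}\sum_{k=0}^{m}\binom{m}{k}w^{m-k}(p\log p)^{k}p^{m/2}.
$$

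The main obstacle is that this uniform Deligne input is not sharp enough: the $k=m$ term alone contributes $p^{m/2}(\log p)^{m}$, exceeding the target $p^{1/2}w^{m-1}\log p$ by a factor of $(\log p)^{m-1}$ when $w\approx p^{1/2}$, and intermediate terms lose further powers of $w/p^{1/2}$. Closing this gap requires the genuine Fouvry--Katz input: an $\ell$-adic cohomological analysis of the Fourier transform $\vec{h}\mapsto T(\vec{h})$, showing that outside a thin ``bad'' locus in $\F_p^{m}$ the Deligne bound is strictly improved, together with a Plancherel-type second-moment estimate that controls the $L^{2}$-mass of $T$ on that locus. Assembling the two regimes and tracking the cancellation against the Fej\'er-type weights $\alpha(h_1)\cdots\alpha(h_m)$ is the technical heart of the estimate; this is the substantive content of \cite[Eq.~(10.6)]{FoKa} and the step I expect to be by far the hardest to reconstruct.
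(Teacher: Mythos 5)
Your proposal attempts to reprove the Fouvry--Katz estimate from scratch via completion and Deligne-type bounds, and you yourself flag---correctly---that the argument cannot close without reconstructing the substantive $\ell$-adic stratification content of~\cite[Eq.~(10.6)]{FoKa}. So what you have written is not a proof of the lemma; it is a description of why a naive completion-plus-Deligne argument falls short, followed by a pointer to the result you would need. That acknowledged gap is a genuine one: the lemma is not recoverable from the uniform bound $|T(\vec{h})|\ll p^{m/2}$ alone, for exactly the log-loss reason you identify.

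What you missed is that the lemma is essentially a black-box citation, and the only content that needs to be supplied is a one-line reduction. Fouvry--Katz~\cite[Eq.~(10.6)]{FoKa} already gives precisely the stated bound for cubes anchored at the origin, i.e.\ for $u=0$, uniformly in the polynomial system. The lemma allows arbitrary $u$; the paper handles this by the substitution $X_j \mapsto X_j + u$, observing that this translation preserves the property of being degree~$2$ independent over~$\F_p$ (a linear change of variables does not alter the degree of any nontrivial $\F_p$-linear combination $\sum a_j G_j$). That is the entire proof. There is no need to re-derive the Fouvry--Katz bound, and attempting to do so turns a two-line observation into a reconstruction of a deep theorem.

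One further inaccuracy worth flagging: you gloss degree-$2$ independence as ensuring the top-degree form ``cuts out a sufficiently smooth projective hypersurface,'' which would put you in the classical Deligne regime for all $\vec{h}$. That is not what the hypothesis says. Degree-$2$ independence only guarantees $\deg\bigl(\sum a_j G_j\bigr)\ge 2$ over $\F_p$ for nontrivial $(a_1,\ldots,a_n)$; it imposes no smoothness or nondegeneracy on the leading form. This is exactly why the Fouvry--Katz/Katz--Laumon stratification machinery is needed rather than a uniform Weil~II bound, and it is also why the result is stated for the family $\Gmnp$ rather than for, say, Deligne polynomials.
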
 

\begin{proof}  The bound in~\cite[Equation~(10.6)]{FoKa}, that gives the desired result
for $u=0$ is   uniform in polynomials $G_1, \ldots, G_m$. It now remains to 
notice that the property of being degree $2$ independent is preserved under the 
change of variables $X_j \to X_j + u$, $j=1, \ldots, m$.
\end{proof}

The proof of Theorem~\ref{system} is based on the following bound for $T_p(\cC)$ for a cube $\cC$ 
which is essentially a result of Fouvry~\cite{Fouv}
\begin{lemma}
\label{fouv} For any  polynomial  system   $\{F_1, \ldots, F_n\} \in \Fmn$   
and  any cubic box
$$\cC=\left[\gamma_1+\frac{u_1}{k},\gamma_1+\frac{u_1+1}{k}\right]\times \dots \times  \left[\gamma_{m}+\frac{u_{m}}{k},\gamma_{m}+\frac{u_{m}+1}{k}\right]  \subseteq \R^m,
$$
where $u_1,\dots, u_m \in \Z$, of side length $1/k$, we have
\begin{equation*}
\begin{split}
T_p(\cC) =\#\cX_p & \(\frac{1}{k}\)^m \\
& +
O\(p^{(m-n)/2}(\log p)^m+ \(\frac{1}{k}\)^{m -n-1}p^{m-n-1/2}(\log p)^{n+1}\).
\end{split} 
\end{equation*}
\end{lemma}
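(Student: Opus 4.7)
The plan is the classical Fourier completion technique combined with Fouvry's exponential sum estimate for the variety $\cX_p$. Writing $e_p(t) = \exp(2\pi i t/p)$, view $p\cC$ as an axis-aligned box of side length $H = p/k$ in $[0,p]^m$ and write $T_p(\cC) = \sum_{\vec{x} \in \cX_p} \mathbf{1}[\vec{x} \in p\cC]$. Expand each coordinate indicator as a discrete Fourier series modulo $p$: $\mathbf{1}[x_i \in I_i] = p^{-1}\sum_{a_i \in \Fp} T_{a_i} e_p(a_i x_i)$, where $T_{a_i} = \sum_{y \in I_i \cap \Z} e_p(-a_i y)$ satisfies $|T_{a_i}| \ll \min(H, p/|a_i|)$. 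This yields
$$
T_p(\cC) = \frac{1}{p^m}\sum_{\vec{a} \in \Fp^m} \prod_{i=1}^m T_{a_i} \cdot S(\vec{a}), \qquad S(\vec{a}) = \sum_{\vec{x} \in \cX_p} e_p(\vec{a}\cdot\vec{x}).
$$
The contribution of $\vec{a} = \vec{0}$ equals $\#(p\cC \cap \Z^m)\,\#\cX_p/p^m = \#\cX_p/k^m + O(\#\cX_p/(pk^{m-1}))$, producing the main term $\#\cX_p/k^m$ plus a boundary error that will be absorbed by the claimed error terms.

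For $\vec{a} \ne \vec{0}$, I would invoke Fouvry's~\cite{Fouv} estimate for character sums over the variety $\cX_p$. Using orthogonality to rewrite $S(\vec{a}) = p^{-n}\sum_{\vec{b} \in \Fp^n} W(\vec{a},\vec{b})$ with complete sums $W(\vec{a},\vec{b}) = \sum_{\vec{x} \in \Fp^m} e_p(\vec{a}\cdot\vec{x}+\vec{b}\cdot\vec{F}(\vec{x}))$, and applying Deligne's form of the Weil conjectures (justified because no absolutely irreducible component of $\cX_p$ lies in a hyperplane, so $\vec{a}\cdot\vec{x}$ is non-constant on each component), one obtains $|S(\vec{a})| \ll p^{(m-n)/2}$ up to logarithmic factors. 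Inserting this bound and summing via $\sum_{a}|T_a| \ll p\log p$ produces the first error term $p^{(m-n)/2}(\log p)^m$. A sharper stratification of the Fourier sum by the number of non-zero coordinates of $\vec{a}$, combined with the Lang--Weil count~\eqref{LaWe} applied to the sections of $\cX_p$ cut out by the corresponding subcharacters, yields the complementary error $(1/k)^{m-n-1}p^{m-n-1/2}(\log p)^{n+1}$, which dominates in the regime of large boxes (small $k$).

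The main obstacle is Fouvry's estimate for $S(\vec{a})$ itself, whose proof rests on Deligne's Weil II apparatus for constructible $\ell$-adic sheaves on the (possibly singular) variety $\cX_p$, together with a careful control of the Euler characteristic of the relevant sheaves. Given this input as a black box, the remainder of the argument is a standard combination of the completion technique and bookkeeping to collect the main term with the two error contributions into the stated asymptotic.
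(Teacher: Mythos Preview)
The paper does not prove this lemma at all: it simply records it as ``essentially a result of Fouvry~\cite{Fouv}'' and uses it as a black box. Your outline has the right skeleton --- Fourier completion plus a bound on the character sums $S(\vec a)=\sum_{\vec x\in\cX_p}e_p(\vec a\cdot\vec x)$ --- but the description of the exponential-sum input is inaccurate in a way that matters.

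The claim that for every nonzero $\vec a$ one has $|S(\vec a)|\ll p^{(m-n)/2}$ (even ``up to logarithmic factors'') is false under the hypothesis $\{F_1,\ldots,F_n\}\in\Fmn$: that hypothesis only rules out components contained in hyperplanes, and for singular or non-generic $\vec a$ the sum $S(\vec a)$ can be as large as $p^{m-n-1/2}$ or more. If the uniform square-root bound held, your first computation would already give the single error $p^{(m-n)/2}(\log p)^m$ and the second error term in the lemma would be superfluous. The actual input from~\cite{Fouv} (building on Katz--Laumon) is a \emph{stratification of the parameter space} of $\vec a$'s: there is a filtration $\A^m=X_0\supset X_1\supset\cdots$ by closed subvarieties with $\dim X_j\le m-j$ such that $|S(\vec a)|\ll p^{(m-n+j)/2}$ for $\vec a\in X_j\setminus X_{j+1}$. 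Summing the Fourier weights $\prod|T_{a_i}|$ over each stratum, using its codimension to control how many $\vec a$'s it contains, is what produces the two competing error terms; the top stratum gives $p^{(m-n)/2}(\log p)^m$, while the deepest strata (few $\vec a$'s but weak cancellation) give $(1/k)^{m-n-1}p^{m-n-1/2}(\log p)^{n+1}$.

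Your proposed alternative --- stratifying by the number of nonzero coordinates of $\vec a$ and invoking Lang--Weil on coordinate sections of $\cX_p$ --- does not produce these bounds: the bad set for $S(\vec a)$ has no reason to be a union of coordinate subspaces, and Lang--Weil on sections only yields trivial bounds of size $p^{m-n}$, not the $p^{(m-n+j)/2}$ needed here. So the completion step is fine, but the heart of the lemma is precisely Fouvry's stratified estimate, and that is why the paper simply cites it.
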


\section{Proof of Theorem~\ref{thm:Omega}}

For a set $\Omega \subseteq \Tm$ and a  box  
$\varPi   \subseteq \Tn$ of the form~\eqref{eq:Box}
let $N(\Omega;\varPi)$ be the number of integer vectors $\vec{x}\in p\Omega$
for which the points~\eqref{eq:points} belong to $\varPi$. 

In particular,  let $N(\Omega)=N(\Omega;\Tm)$ be the number of 
integer vectors $\vec{x}\in p\Omega$.
A simple geometric argument shows that if $\Omega = \Gamma \subseteq \Tm$ is a cube
then 
\begin{equation}
\label{eq:N cube 1}
N(\Gamma) = \mu(\Gamma)p^{m}  +  O\(p^{m-1}\mu(\Gamma)^{(m-1)/m}\).
\end{equation}

We start with deriving a lower bound on $N(\Omega;\varPi)$. 

We now recall some constructions and arguments from the proof of~\cite[Theorem~2]{Schm}. 
Pick a point $\boldsymbol\gamma  = (\gamma_1, \ldots, \gamma_m) \in \Tm$ such that all its 
coordinates are irrational. 
For positive $k$, let $\fC(k)$ be the set of cubes of the form
$$
\left[\gamma_1 + \frac{u_1}{k}, \gamma_1 + \frac{u_1+1}{k}\right]\times \ldots  \times \left[\gamma_m + \frac{u_m}{k}, \gamma_m + \frac{u_m+1}{k}\right]
\subseteq \R^m,
$$
where $u_1,\dots, u_m \in \Z$. Note that the above irrationality condition guarantees that the 
points $p^{-1} \vec{x}$ with $\vec{x} \in \Z^m$ 
all belong to the interior of the cubes from $\fC(k)$. 

Furthermore, let $\cC(k)$ be the set of cubes from $\fC(k)$ that are contained inside
of $\Omega$. By~\cite[Equation~(9)]{Schm},  for
any  well shaped set $\Omega\in \Tm$,  we have
\begin{equation}
\label{eq:Card Ck}
\#\cC(k) = k^m\mu(\Omega) + O(k^{m-1}).
\end{equation}
Let $\cB_1 = \cC(2)$ and for $i =2,3, \ldots$, 
let $\cB_i$ be the set of cubes $\Gamma \in \cC(2^i)$ that are not contained in any cube from $\cC(2^{i-1})$.
Clearly
$$
2^{-im} \# \cB_i  + 2^{-(i-1)m} \#\cC(2^{i-1})  \le \mu(\Omega), \qquad i=2,3, \ldots.
$$
We now see from~\eqref{eq:Card Ck} that 
\begin{equation}
\label{eq:Card}
\# \cB_i \ll 2^{i(m-1)}
\end{equation}
and also for any integer $M\ge 1$,
$$
\Omega  \setminus \Omega_\varepsilon^{-}  \subseteq \bigcup_{i=1}^M  \bigcup_{\Gamma \in \cB_i}\Gamma  \subseteq \Omega
$$
with $\varepsilon =  m^{1/2}2^{-M}$.  
Since $\Omega$ is well shaped, we obtain
\begin{equation}
\label{eq:Aprox}
 \mu\(\bigcup_{i=1}^M  \bigcup_{\Gamma \in \cB_i}\Gamma\) =  \mu\(\Omega\) + O(2^{-M}).
\end{equation}

Using Lemma~\ref{lem:Kok-Szu} (taken with $L=(p-1)/2$) and recalling 
the bound of Lemma~\ref{lem:ExpSums} we see that the 
discrepancy $D(\Gamma)$ of the points~\eqref{eq:points}
with $\vec{x} \in p\Gamma$, for a cube $\Gamma$ satisfies 
$$ 
D(\Gamma)  \ll \frac{p^{1/2} \(p\mu(\Gamma)^{1/m}\)^{m-1}}{\mu(\Gamma) p^m}(\log p)^{n+1}
= p^{-1/2} \mu(\Gamma)^{-1/m} (\log p)^{n+1}.
$$
Therefore, using~\eqref{eq:N cube 1}, we derive
\begin{equation} 
\begin{split}
\label{eq:N cube 2}
N(\Gamma;\varPi)  & = \lambda(\varPi) N(\Gamma) + O( N(\Gamma)  D(\Gamma) )\\
& = \lambda(\varPi)  \mu(\Gamma) p^m +  O\(p^{m-1/2} \mu(\Gamma)^{(m-1)/m}(\log p)^{n+1}\).
\end{split} 
\end{equation} 
Hence
\begin{equation}
\label{eq:Prelim}
N(\Omega;\varPi)\ge \sum_{i=1}^M \sum_{\Gamma \in \cB_i}N(\Gamma;\varPi) 
= \lambda(\varPi) p^m \sum_{i=1}^M \sum_{\Gamma \in \cB_i} \mu(\Gamma) + O(R),
\end{equation}
where
$$R
= p^{m-1/2} (\log p)^{n+1}\sum_{i=1}^M  \# \cB_i 2^{-i(m-1)} .
$$
We see from~\eqref{eq:Aprox}  that 
\begin{equation}
\label{eq:Main T}
\sum_{i=1}^M \sum_{\Gamma \in \cB_i} \mu(\Gamma)=    \mu\(\bigcup_{i=1}^M  \bigcup_{\Gamma \in \cB_i}\Gamma\) = 
\mu\(\Omega\) +  O(2^{-M}).
\end{equation}
 
Furthermore, using~\eqref{eq:Card}, we derive
\begin{equation}
\label{eq:Error T}
\begin{split}
R  \ll M p^{m-1/2} (\log p)^{n+1}.
\end{split} 
\end{equation}
We now choose $M$ to satisfy 
$$
2^{M }\le  p^{1/2}  < 2^{(M+1)}.
$$
Now, substituting~\eqref{eq:Main T} and~\eqref{eq:Error T} 
in~\eqref{eq:Prelim} with the above  choice of $M$, we obtain 
 \begin{equation}
\label{eq:LB}
N(\Omega;\varPi)\ge \lambda(\varPi)   \mu(\Omega) p^m+  O\(p^{m-1/2}(\log p)^{n+2}\).
\end{equation}

Since the complementary set $\overline \Omega = \Tm\setminus \Omega$ 
is also well shaped,   we also have
 \begin{equation}
\label{eq:UB}
N(\overline \Omega;\varPi)\le \lambda(\varPi)  
\mu(\overline \Omega) p^m +   O\(p^{m-1/2}(\log p)^{n+2}\).
\end{equation}
Note that by~\eqref{eq:N cube 2} we have
$$
N(\Tm;\varPi) = \lambda(\varPi)  p^m+  O\(p^{m-1/2}(\log p)^{n+1}\).
$$
Now, since 
$$
N(\overline \Omega;\varPi) = N(\Tm;\varPi)-N(\varPi)
\mand \mu(\overline \Omega)  = 1 - \mu( \Omega) ,
$$
we now see that~\eqref{eq:UB} implies that upper bound
$$
N(\Omega;\varPi) \le \lambda(\varPi)   \mu(\Omega) p^m+  O\(p^{m-1/2}(\log p)^{n+2}\)
$$ 
together with~\eqref{eq:LB} leads to the desired asymptotic formula
$$
N(\Omega;\varPi) = \lambda(\varPi)   \mu(\Omega) p^m+  O\(p^{m-1/2}(\log p)^{n+2}\).
$$ 

Since $D(\Omega) \le 1$, we can   assume that
$$
\mu(\Omega) \ge c_0 p^{-1/2}(\log p)^{n+2}.
$$
for a sufficiently large constant $c_0 > 0$
as otherwise the result is trivial. 
Thus
$$
\frac{N(\Omega;\varPi)}{N(\Omega)}  = \lambda(\varPi)  +  O\( \mu(\Omega)^{-1} p^{-1/2}(\log p)^{n+2}\)
$$
which concludes the proof. 

\section{Proof of Theorem~\ref{thm:Omega vws}}

 If $\Omega$ is very well shaped we may use the same method as the proof of Theorem~\ref{thm:Omega} to replace the bounds ~\eqref{eq:Card} and ~\eqref{eq:Aprox} with
\begin{equation}
\label{eq:Card vws}
\#\cB_i \ll 1+ \mu(\Omega)^{(m-1)/m}2^{i(m-1)}
\end{equation}
and
\begin{equation}
\label{eq:Aprox vws}
 \mu\(\bigcup_{i=1}^M  \bigcup_{\Gamma \in \cB_i}\Gamma\) =  \mu\(\Omega\) + O(\mu(\Omega)^{(m-1)/m}2^{-M}
+2^{-Mm}).
\end{equation}
Recalling the lower bound ~\eqref{eq:Prelim}
\begin{equation*}
N(\Omega;\varPi)\ge \lambda(\varPi) p^m \sum_{i=1}^M \sum_{\Gamma \in \cB_i} \mu(\Gamma) + O(R),
\end{equation*}
where
$$R
= p^{m-1/2} (\log p)^{n+1}\sum_{i=1}^M  \# \cB_i 2^{-i(m-1)}.
$$
We use ~\eqref{eq:Card vws} to bound the term $R$. First we note if $\# \cB_i >0$ we must have $2^{-im}\leq \mu(\Omega)$. Hence
\begin{equation*}
\begin{split}
\sum_{i=1}^M  \# \cB_i 2^{-i(m-1)}&= \sum_{i\ge -\log{\mu(\Omega)}/(m\log{2})}^{M}\# \cB_i2^{-i(m-1)} \\
&\ll \sum_{i\ge -\log{\mu(\Omega)}/(m\log{2})}^{M} \(\mu(\Omega)^{(m-1)/m}+2^{-i(m-1)}\) \\
&\ll M\mu(\Omega)^{(m-1)/m}+\mu(\Omega)^{(m-1)/m}\\ &\ll M\mu(\Omega)^{(m-1)/m}
\end{split} 
\end{equation*}
so that
\begin{equation*}
\begin{split}
R\ll M\mu(\Omega)^{(m-1)/m}p^{m-1/2} (\log p)^{n+1}.
\end{split} 
\end{equation*}
By ~\eqref{eq:Aprox vws} we have,
\begin{equation*}
\begin{split}
\sum_{i=1}^M \sum_{\Gamma \in \cB_i} \mu(\Gamma)&= \mu\(\bigcup_{i=1}^M  \bigcup_{\Gamma \in \cB_i}\Gamma\) =  \mu\(\Omega\) + O(\mu(\Omega)^{(m-1)/m}2^{-M}+2^{-Mm}).
\end{split} 
\end{equation*}
Hence
\begin{equation*}
\begin{split}
N(\Omega;\varPi)\ge \lambda(\varPi) \mu(\Omega) p^m+O(\mu(\Omega&)^{(m-1)/m}p^{m}2^{-M}+p^{m}2^{-Mm}
\\ &   + M\mu(\Omega)^{(m-1)/m}p^{m-1/2} (\log p)^{n+1}).
\end{split} 
\end{equation*}

Since $D(\Omega) \le 1$, we can   assume that
$$
\mu(\Omega) \ge c_0 p^{-m/2} (\log p)^{m(n+2)}
$$ 
for a sufficiently large constant $c_0 > 0$. 
Thus,  choosing $M$ so that
$$2^{M}\leq p \leq 2^{(M+1)},$$
 gives
\begin{equation*}
\begin{split}
N(\Omega;\varPi)&\ge \lambda(\varPi) \mu(\Omega) p^{m}+O(\mu(\Omega)^{(m-1)/m}p^{m-1/2}(\log{p})^{n+2}).
\end{split} 
\end{equation*}
 The upper bounds for $N(\Omega;\varPi)$ and $D(\Omega)$ 
 follow the same method as in the proof of Theorem~\ref{thm:Omega}.

\section{Proof of Theorem~\ref{system}}
Given $\Omega$ very well shaped, we consider the same constructions in the proof of Theorem~\ref{thm:Omega}. As in Theorem~\ref{thm:Omega vws} we have the bound
\begin{equation}
\label{very well shaped cubes bound}
\# \cB_i \ll 1+\mu(\Omega)^{1-1/m}2^{i(m-1)}.
\end{equation}
The set inclusions
\begin{equation}
\label{omega}
\Omega  \setminus \Omega_\varepsilon^{-}  \subseteq \bigcup_{i=1}^M  \bigcup_{\Gamma \in \cB_i}\Gamma  \subseteq \Omega
\end{equation}
give the approximation
\begin{equation}
\label{very well shaped approx by cubes}
\mu \(\bigcup_{i=1}^{M}\bigcup_{\Gamma \in \cB_i}\Gamma \)=\mu(\Omega)+O \(\frac{\mu(\Omega)^{1-1/m}}{2^M}+\frac{1}{2^{mM}}\).
\end{equation}
Using Lemma~\ref{fouv} and~\eqref{omega},
$$
T_p(\Omega)\geq \sum_{i=1}^{M} \sum_{\Gamma\in \cB_i}T_p(\Gamma)=
\#\cX_p \sum_{n=1}^{M}\sum_{\Gamma \in \cB_i}\mu(\Gamma)+O(R),
$$
where
$$R=\sum_{i=1}^{M}\#B_i \( p^{(m-n)/2}(\log{p})^m+2^{-i(m-n-1)}p^{m-n-1/2}(\log{p})^{n+1}\).$$
By (\ref{very well shaped approx by cubes})
$$
\# \cX_p \sum_{n=1}^{M} \sum_{\Gamma \in \cB_i}\mu(\Gamma)=\# \cX_p \(\mu(\Omega)+
O \(\frac{\mu(\Omega)^{1-1/m}}{2^M}+\frac{1}{2^{mM}}\) \)$$
and using~\eqref{LaWe} we have
$$
\# \cX_p  \sum_{n=1}^{M} \sum_{\Gamma \in \cB_i}\mu(\Gamma)=\# \cX_p\mu(\Omega)
+O \(\frac{p^{m-n}\mu(\Omega)^{1-1/m}}{2^M}+\frac{p^{m-n}}{2^{mM}}\).
$$
For the term $R$, by~\eqref{very well shaped cubes bound}
\begin{equation*}
\begin{split}
R&\ll  \sum_{i=1}^{M}\(p^{(m-n)/2}(\log{p})^m+ (p2^{-i})^{m-n-1}p^{1/2}(\log{p})^{n+1}\) \\
   & \qquad  +\sum_{i=1}^{M}\(\mu(\Omega)^{1-1/m}2^{i(m-1)}p^{(m-n)/2}(\log{p})^m+(p2^{-i})^{m-n-1}p^{1/2}(\log{p})^{n+1}\)
\\ &\ll Mp^{(m-n)/2}(\log{p})^m+ p^{m-n-1/2}(\log{p})^{n+1} \sum_{i=1}^{M}2^{-i(m-n-1)} \\    &  \qquad  +\mu(\Omega)^{1-1/m}p^{(m-n)/2}\sum_{i=1}^{M}2^{i(m-1)} (\log{p})^m  \\ & \qquad  + 
\mu(\Omega)^{1-1/m}p^{m-n-1/2} \sum_{i=1}^{M}2^{in} (\log{p})^{n+1} \\
&\ll M \( p^{(m-n)/2}(\log{p})^m+p^{m-n-1/2}(\log{p})^{n+1}\) \\ & \qquad
+\mu(\Omega)^{1-1/m} \( 2^{M(m-1)}p^{(m-n)/2}(\log{p})^m+2^{Mn}p^{m-n-1/2}(\log{p})^{n+1}\).
\end{split} 
\end{equation*}
Hence we have
\begin{equation}
\label{upper bound T}
T_p(\Omega)\geq \# \cX_p \mu(\Omega)+O(R_1+R_2+R_3)
\end{equation}
with
\begin{equation*}
\begin{split}
R_1&=\frac{p^{m-n}\mu(\Omega)^{1-1/m}}{2^M}+\frac{p^{m-n}}{2^{mM}}, \\
R_2&=\mu(\Omega)^{1-1/m} \( 2^{M(m-1)}p^{(m-n)/2}(\log{p})^m+2^{Mn}p^{m-n-1/2}(\log{p})^{n+1}\), \\
R_3&=Mp^{m-n-1/2}(\log{p})^{n+1}.
\end{split} 
\end{equation*}
It is clear that for the bound to be nontrivial we have to choose $M  = O(\log p)$, 
under which condition we have 
$$
R_3=p^{m-n-1/2}(\log{p})^{n+2}.
$$
Now considering all four ways of balancing the terms of $R_1$ and $R_2$, after 
straight forward calculations we conclude that the optimal choice of $M$ is defined by 
the condition
$$2^{-M}\leq p^{-1/2(n+1)}\log{p}<2^{-M+1}.$$
that  balances the first term of $R_1$ and the second term of $R_2$.
This gives
\begin{equation}
\begin{split}
\label{eq:bound R}
R&\ll p^{m-n-1/2(n+1)}\mu(\Omega)^{1-1/m}\log{p}+p^{m-n-m/2(n+1)}(\log{p})^m \\
& \qquad\quad  +p^{(m-n)-1/2(n+1)-n(m-1-n)/2(n+1)}\mu(\Omega)^{1-1/m}\log{p}  \\
& \qquad\qquad\qquad\qquad\qquad\qquad\qquad\qquad\quad+ p^{m-n-1/2}(\log{p})^{n+2}  \\
&\ll p^{m-n-1/2(n+1)}\mu(\Omega)^{1-1/m}\log{p}+p^{m-n-1/2}(\log{p})^{n+2}.
\end{split} 
\end{equation}
Hence by~\eqref{LaWe},
\begin{equation}
\begin{split}
\label{almost}
T_p(&\Omega)\\
&\geq \# \cX_p \( \mu(\Omega)+O \(\mu(\Omega)^{1-1/m}p^{-1/2(n+1)}\log{p}+p^{-1/2}(\log{p})^{n+2}\)\).
\end{split} 
\end{equation}
Although since 
$$(\Tm\setminus \Omega)_{\varepsilon}^{-}=\Omega_{\varepsilon}^{+}\leq C \(\mu(\Omega)^{1-1/m}\varepsilon+\varepsilon^{m}\)$$
we may repeat the above argument to get,
\begin{equation}
\begin{split}
\label{almost finished}
T_p(\Tm\setminus \Omega)&\geq \#\cX_p \mu(\Tm\setminus \Omega) \\ &  +O\(\# \cX_p\(\mu(\Omega)^{1-1/m}p^{-1/2(n+1)}\log{p}+p^{-1/2}(\log{p})^{n+2}\)\).
\end{split} 
\end{equation}
Finally, combining~\eqref{LaWe}, \eqref{almost} and~\eqref{almost finished},  gives
the desired result.

\end{document}